\theoremstyle{plain}
\newtheorem{theorem}{Theorem}[section]
\newtheorem{lemma}[theorem]{Lemma}
\newtheorem{corollary}[theorem]{Corollary}
\newtheorem{conjecture}[theorem]{Conjecture}
\theoremstyle{remark}
\renewcommand{\endproof}{\hspace*{\fill}$\blacksquare$}
\newcommand{\pr}{{\mathbb{P}}}
\newcommand{\R}{{\mathbb{R}}}
\newcommand{\QQ}{{\mathcal{Q}}}
\newcommand{\N}{{\mathbb{N}}}
\newcommand{\Var}{\mathrm{Var}}
\def\={\stackrel {\rm def}  {=}}
\def\di={\overset{\text{${\mathcal{D} }$}} =}
\begin{document}

\begin{frontmatter}
\title{The sharp form of the Kolmogorov--Rogozin inequality and a conjecture of Leader--Radcliffe}
\runtitle{}

\begin{aug}
\author[A]{\fnms{Tomas} \snm{Ju\v skevi\v cius}},
\address[A]{Institute of Mathematics of the Czech Academy of Sciences}

\end{aug}

\begin{abstract}
    Let $X$ be a random variable and define its concentration function by
$$\QQ_{h}(X)=\sup_{x\in \R}\pr(X\in (x,x+h]).$$
For a sum $S_n=X_1+\cdots+X_n$ of independent real-valued random variables the Kolmogorov-Rogozin inequality states that
$$\QQ_{h}(S_n)\leq C\left(\sum_{i=1}^{n}(1-\QQ_{h}(X_i))\right)^{-\frac{1}{2}}.$$
In this paper we give an optimal bound for $\QQ_{h}(S_n)$ in terms of $\QQ_{h}(X_i)$, which settles a question posed by Leader and Radcliffe in 1994. Moreover, we show that the extremal distributions are mixtures of two uniform distributions each lying on an arithmetic progression.     
\end{abstract}

\begin{keyword}[class=MSC]
\kwd[Primary ]{60E15}
\kwd[; secondary ]{60G50}
\end{keyword}

\begin{keyword}
\kwd{concentration function}
\kwd{the Littlewood-Offord problem}
\kwd{sums of independent random variables}
\end{keyword}

\end{frontmatter}

\section{Introduction}

Define the concentration function $\QQ_{h}(X)$ of a random variable $X$ is defined to be the quantity
\begin{equation*}
\QQ_{h}(X)=\sup_{x\in \R}\pr(X\in (x,x+h]).
\end{equation*}
Let $S_{n}=X_{1}+\cdots+ X_{n}$ be a sum of independent random variables. Doeblin and Levy \cite{DoeblinLevy} were the first to study the spread of the distribution of $S_n$ in terms of its concentration function by establishing quantitative bounds on $\QQ_{h}(S_n)$ in terms of the individual concentration functions $\QQ_{h}(X_i)$ of the summands. Kolmogorov \cite{Kolmogorov} improved these results and obtained a bound that was asymptotically sharp up to a logarithmic factor in $n$. The latter result was improved by Rogozin \cite{KR} who removed the logarithmic factor. 
\begin{theorem} [Kolmogorov--Rogozin]\label{KOL}
There exists an absolute constant $C>0$ such that for $h>0$ we have
$$\QQ_{h}(S_n)\leq C\left(\sum_{i=1}^{n}(1-\QQ_{h}(X_i))\right)^{-\frac{1}{2}}.$$
\end{theorem}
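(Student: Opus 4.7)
The plan is Fourier-analytic: via Esseen's concentration inequality one converts the bound on $\QQ_h(S_n)$ into an integral estimate on the characteristic function $\phi_{S_n}(t)=\E e^{itS_n}$, which factorizes over the independent summands. Concretely, I would invoke the classical estimate
$$\QQ_h(X)\;\leq\; C_1\, h\int_{-1/h}^{1/h}|\phi_X(t)|\,dt,$$
valid for every random variable $X$ with an absolute $C_1>0$; this follows from Fourier inversion paired with a Fejér-type kernel approximating the indicator of $(0,h]$.

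\textbf{Pointwise bound on each factor.} The main lemma is
$$|\phi_{X_i}(t)|^2 \;\leq\; 1-c_1\, h^2 t^2\bigl(1-\QQ_h(X_i)\bigr),\qquad |t|\leq 1/h,$$
for some absolute $c_1>0$. Letting $X_i'$ denote an independent copy of $X_i$, one has $|\phi_{X_i}(t)|^2=\E\cos\bigl(t(X_i-X_i')\bigr)$, so
$$1-|\phi_{X_i}(t)|^2 \;=\; \E\bigl(1-\cos(t(X_i-X_i'))\bigr) \;\geq\; c\,t^2\,\E\min\bigl(h^2,(X_i-X_i')^2\bigr),$$
via the elementary bound $1-\cos u\geq c\min(1,u^2)$. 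It therefore suffices to establish the symmetrization inequality $\E\min(h^2,(X-X')^2)\geq c' h^2(1-\QQ_h(X))$. This is the combinatorial core of the argument: when $X$ and $X'$ fall into well-separated length-$h$ intervals one automatically has $|X-X'|\geq h$, and the probability of this happening is comparable to $1-\QQ_h(X)$ after carefully aligning the partition of $\R$ with a near-maximizer for the concentration function.

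\textbf{Combining and integrating.} Independence gives $|\phi_{S_n}(t)|^2=\prod_i|\phi_{X_i}(t)|^2$. Setting $\sigma^2:=\sum_i(1-\QQ_h(X_i))$ and using $\prod_i(1-a_i)\leq\exp\bigl(-\sum_i a_i\bigr)$,
$$|\phi_{S_n}(t)|\;\leq\;\exp\!\Big(-\tfrac{1}{2}c_1 h^2 t^2\sigma^2\Big)\qquad\text{for }|t|\leq 1/h.$$
Substituting into Esseen's inequality and evaluating the resulting Gaussian integral yields $\QQ_h(S_n)\leq C\sigma^{-1}$, which is the theorem.

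The main obstacle is the symmetrization lemma above, specifically $\E\min(h^2,(X-X')^2)\geq c' h^2(1-\QQ_h(X))$. The difficulty is that $\QQ_h(X)$ is a supremum over all length-$h$ intervals rather than the mass of a cell in a fixed partition, so extracting the factor $1-\QQ_h(X)$ (as opposed to a weaker quantity such as $1-3\QQ_h(X)$, which is only useful when $\QQ_h(X)<1/3$) requires carefully aligning the partition with a near-maximizing interval and separately handling the regime in which $\QQ_h(X)$ is close to $1$.
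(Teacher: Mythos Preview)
The paper does not prove Theorem~\ref{KOL}; it is stated as classical background with a citation to Rogozin. The paper's own work is the sharp form (Theorem~\ref{main}), established by an entirely different, combinatorial route: a representation of each $X_i$ as a mixture of uniform measures on well-separated points (Lemma~\ref{repr}) followed by a LYM/Sperner argument on a product of chains (Lemma~\ref{BKT}). So there is no in-paper proof of the Kolmogorov--Rogozin inequality to compare against; your Esseen-type Fourier approach is the standard textbook proof of that classical result and is a completely different method from anything the paper does.

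That said, your outline contains a genuine error, and it is not where you think. The step you flag as the ``main obstacle'' --- the symmetrization inequality $\E\min(h^2,(X-X')^2)\ge c'h^2(1-\QQ_h(X))$ --- is in fact immediate once you use the threshold $h/2$ rather than $h$: since $(x-h/2,x+h/2)$ has length exactly $h$, conditioning on $X'$ gives $\Pr(|X-X'|<h/2)\le \QQ_h(X)$, whence $\E\min(h^2,(X-X')^2)\ge (h/2)^2\Pr(|X-X'|\ge h/2)\ge \tfrac{h^2}{4}(1-\QQ_h(X))$. No alignment of partitions or case analysis for $\QQ_h(X)$ near $1$ is needed.

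The actual gap is one step earlier. The ``elementary bound'' $1-\cos u\ge c\min(1,u^2)$ is false: take $u=2\pi$. Consequently your pointwise ``main lemma'' $|\phi_{X_i}(t)|^2\le 1-c_1h^2t^2(1-\QQ_h(X_i))$ for all $|t|\le 1/h$ is also false. A concrete counterexample is $X_i$ uniform on $\{0,2\pi h\}$: then $\QQ_h(X_i)=\tfrac12$, yet $|\phi_{X_i}(1/h)|=|\tfrac12(1+e^{2\pi i})|=1$, so the claimed inequality at $t=1/h$ would force $1\le 1-\tfrac{c_1}{2}$. The exponential bound $|\phi_{S_n}(t)|\le\exp(-\tfrac12 c_1h^2t^2\sigma^2)$ therefore cannot hold pointwise on $|t|\le 1/h$, and the Gaussian-integral step collapses. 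Esseen's actual argument avoids any such pointwise control: one works with the averaged quantity, using the valid inequality $1-\tfrac{\sin u}{u}\ge c\min(1,u^2)$ (which is what one gets after integrating $1-\cos(t y)$ in $t$), and combines it with a second application of the concentration--characteristic-function relation rather than a direct Gaussian comparison. Your strategy is salvageable along these lines, but not as written.
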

The latter inequality is asymtotically sharp in the case $\QQ_{h}(X_i)=\alpha$ for fixed $\alpha$ as then the right hand side is $O(n^{-1/2})$. Yet for small values of $\alpha$ it degenerates. This deficiency was removed by Kesten \cite{Kesten} who inserted a certain multiplicative factor that provides the correct asymptotics of the bound as $\alpha \rightarrow 0$.\\

The goal of this paper is to establish the optimal upper bound in Theorem \ref{KOL}. Before stating our main result let us first adopt some conventions. For $k\in \N$ let us denote by $\nu^{k}$ the uniform distribution on the $k$ term arithmetic progression $\left\{-k+1,-k+3,\ldots,k-1\right\}$. For $\alpha\in[\frac{1}{k+1},\frac{1}{k}]$ we denote by $T(\alpha)$ a random variable having distribution 
$$(1-\tau)\nu^{k+1}+\tau \nu^{k}$$ 
with $\tau=k(k+1)\alpha-k$. In particular, the random variable $T(\frac{1}{k})$ has distribution $\nu^{k}$. Note that $\QQ_{2}(T(\alpha))$ is attained by any interval containing two consecutive atoms of $T(\alpha)$ and it easily follows that from the definition that $\QQ_{2}(T(\alpha))=\alpha$.\\ 

For any random variable $X$ and $a>0$ we have $\QQ_{h}(X)=\QQ_{h/a}(aX)$. We shall henceforth only treat the case $h=2$ for concentration functions of individual random variables under consideration and write $\QQ$ instead of $\QQ_{2}$. The reason for picking this particular value will soon become apparent. We now state the main result of the paper.

\begin{theorem}\label{main}
Let $X_1,\ldots, X_n,$ be independent random variables such that $\QQ_{2}(X_i)\leq\alpha_i\in[0,1]$. For all integer $\ell \geq 1$ we have
\begin{eqnarray*}\label{nel}
\QQ_{2\ell}\left(X_1+\cdots+X_n\right) &\leq& \QQ_{2\ell}\left(T_1(\alpha_1)+\cdots+T_n(\alpha_n)\right),\\
\end{eqnarray*}
where $T_{i}(\alpha_i)$ are independent.
\end{theorem}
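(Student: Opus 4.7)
The overall plan is to prove Theorem \ref{main} by induction on $n$, reducing it to a single-variable replacement lemma: for every random variable $Y$ independent of $X$, and every $X$ with $\QQ_2(X)\le\alpha$,
\begin{equation*}
\QQ_{2\ell}(X+Y)\le \QQ_{2\ell}(T(\alpha)+Y).
\end{equation*}
Granting this, one builds up Theorem \ref{main} term by term: starting from $S_n=X_1+\cdots+X_n$, set $Y^{(1)}=X_2+\cdots+X_n$ to obtain $\QQ_{2\ell}(S_n)\le \QQ_{2\ell}(T_1(\alpha_1)+Y^{(1)})$, then set $Y^{(2)}=T_1(\alpha_1)+X_3+\cdots+X_n$ and repeat, replacing one $X_i$ by $T_i(\alpha_i)$ at each stage.

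Before attacking the lemma I would perform two preliminary reductions. First, by a standard approximation argument it suffices to assume all $X_i$ are integer-valued: finer and finer lattice discretizations of a general $X$ can be arranged to still satisfy $\QQ_2\le \alpha$, and $\QQ_{2\ell}$ of the sum behaves continuously in the limit. Second, by monotonicity in $\alpha$ of $\QQ_{2\ell}(T(\alpha)+Y)$ one may assume $\QQ_2(X)=\alpha$ exactly.

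The heart of the argument is the lemma itself. Writing $g(y)=\pr(Y\in(y,y+2\ell])$, one has
\begin{equation*}
\QQ_{2\ell}(X+Y)=\sup_{x\in\R}\E g(x-X),
\end{equation*}
which is an affine functional in the distribution of $X$ for each fixed $x$. The set of laws with $\QQ_2\le \alpha$ is convex, so an optimal $X$ can be sought among its extreme points. The combinatorial task is to show that, after also taking the supremum over $x$, the maximum is attained by the specific mixture $T(\alpha)$ --- a uniform distribution on a $2$-spaced arithmetic progression when $\alpha=1/k$, or a convex combination of two such when $\alpha\in(\frac{1}{k+1},\frac{1}{k})$. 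I would proceed by an \emph{exchange/compression argument}: supposing an optimizing $X$ has two atoms that do not belong to a common $2$-spaced AP, transfer a sliver of mass between them in a way that preserves the constraint $\QQ_2\le \alpha$ everywhere while not decreasing $\sup_x\E g(x-X)$; the required monotonicity of $g$-shifted averages should follow from the representation of $g$ as the $2\ell$-window of a CDF, hence as an average of $2\ell$ unit-window shifts of a CDF.

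The main obstacle will be this exchange step. The constraint $\QQ_2(X)\le \alpha$ is a dense family of linear inequalities (one for each interval of length $2$), so extremal $X$ can have complicated support, and a local move must simultaneously preserve all the active constraints, push the support toward a symmetric AP, and keep the objective non-decreasing. A secondary subtlety is that for $\alpha\in(\frac{1}{k+1},\frac{1}{k})$ the extremum is a genuine mixture of APs of different lengths, so the argument is not a pure rearrangement onto a single AP but must also pinpoint the correct mixture weight $\tau=k(k+1)\alpha-k$, presumably through a complementary-slackness analysis of the two active length constraints.
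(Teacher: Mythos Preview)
Your induction strategy fails because the single-variable replacement lemma is false. Take $n=2$, $\ell=1$, $\alpha_1=\alpha_2=1/3$, and let $X_1,X_2$ be independent and uniform on $\{0,3,6\}$, so that $\QQ_2(X_i)=1/3$. Then $X_1+X_2$ takes the values $0,3,6,9,12$ with probabilities $\tfrac19,\tfrac29,\tfrac39,\tfrac29,\tfrac19$, whence $\QQ_2(X_1+X_2)=1/3$. On the other hand $T(1/3)$ is uniform on $\{-2,0,2\}$, and $T(1/3)+X_2$ is uniform on the nine \emph{distinct} points $\{-2,0,1,2,3,4,5,6,8\}$, so $\QQ_2(T(1/3)+X_2)=2/9<1/3$. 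Thus already the first replacement step $\QQ_2(X_1+X_2)\le\QQ_2(T(1/3)+X_2)$ is violated. (The theorem itself holds here with equality, since $\QQ_2\bigl(T_1(1/3)+T_2(1/3)\bigr)=1/3$; the point is that the chain of one-at-a-time inequalities is not monotone.) Consequently the exchange/compression argument you sketch cannot succeed, because it is aimed at a false target; no amount of extreme-point or complementary-slackness analysis will identify $T(\alpha)$ as the maximizer of $\QQ_{2\ell}(X+Y)$ over $\QQ_2(X)\le\alpha$ for a general $Y$.

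The paper avoids any one-at-a-time replacement. It first proves a representation lemma: every simple law with $\QQ_2=\alpha\in\bigl(\tfrac{1}{k+1},\tfrac{1}{k}\bigr]$ is a convex combination, with weights $(1-\tau,\tau)$ where $\tau=k(k+1)\alpha-k$, of uniform measures on $(k+1)$-point and on $k$-point sets whose points are pairwise at distance at least $2$. Expanding the product $\prod_i\mathcal{L}(X_i)$ multilinearly then reduces the problem to the special case in which every $X_i$ is uniform on some $k_i$ points at mutual distance $\ge 2$. That case is handled \emph{jointly for all coordinates at once} by a LYM-type inequality for $\ell$-Sperner families in the product of chains $\{0,\dots,k_1-1\}\times\cdots\times\{0,\dots,k_n-1\}$, which bounds the number of vectors whose associated sum lands in a fixed interval of length $2\ell$ by the size of the $\ell$ middle layers. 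This joint combinatorial step is the essential idea your proposal is missing.
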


The latter inequality is therefore optimal with the choice $X_i \buildrel d \over = T_{i}(\alpha_i)$ saturating the bound. It turns out that
$$\QQ_{2\ell}\left(T_1(\alpha_1)+\cdots+T_n(\alpha_n)\right)=\mathbb{P}\left(T_1(\alpha_1)+\cdots+T_n(\alpha_n) \in (-\ell,\ell]\right),$$
which is not obvious since $T_1(\alpha_1)+\cdots+T_n(\alpha_n)$ is not in general unimodal.

Leader and Radcliffe \cite{LR} established Theorem \ref{main} in the special case $\alpha_i=\frac{2}{k}$ for integer $k\geq 2$ and $l=1$ and posed the question to extend their inequality for other values of $\alpha$. Theorem \ref{main} thus resolves their question and also extends the desired result to all $\ell \in \N$ and possibly different $\alpha_i$'s. The choice $\alpha_i=\frac{1}{2}$ recovers the famous Littlewood--Offord inequality by Erd\H{o}s \cite{LO}. The case $\alpha_i=\frac{2}{k}$ is special as only in this case $T_i(\alpha)$ has a uniform distribution. The reason for picking $h=2$ is now clear --- it is the smallest natural number for which the maximizing distributions take integer values.\\

In most applications of the Kolmogorov--Rogozin type bounds are applied for the case $\alpha_i=\alpha$. For this case the Local Limit Theorem can be used to obtain a simple bound with the asymptotically sharp constant in Theorem \ref{main}.\\

\begin{corollary}\label{cor1}
Let $X_1,\ldots,X_n$ independent random variables with $\QQ(X_i)\leq \alpha\in(0,1)$. For $\ell \geq 1$ and $n\rightarrow \infty$ we have
\begin{equation*}
\QQ_{2\ell}(X_1+\cdots+X_n) \leq \frac{2\ell+o(1)}{\sqrt{2\pi\Var(T_{1}+\cdots+T_{n})}}.
\end{equation*}

\end{corollary}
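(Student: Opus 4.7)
The plan is to combine Theorem \ref{main} with the classical Local Limit Theorem for lattice distributions. First, Theorem \ref{main} applied with $\alpha_i = \alpha$ reduces the task to proving
$$\QQ_{2\ell}(T_1 + \cdots + T_n) \leq \frac{2\ell + o(1)}{\sqrt{2\pi\, \Var(T_1 + \cdots + T_n)}},$$
where $T_1, \ldots, T_n$ are i.i.d.\ copies of $T(\alpha)$.

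Next, I would identify the maximal lattice span $d$ of $T(\alpha)$. If $\alpha = 1/k$ for some integer $k \geq 2$, then $T(\alpha) = \nu^k$ is supported on an arithmetic progression with common difference $2$, so $d=2$. Otherwise $T(\alpha)$ is a nontrivial mixture of $\nu^{k+1}$ and $\nu^{k}$ whose support contains both even and odd integers, so $d=1$. In either case $T(\alpha)$ is bounded, integer-valued, and nondegenerate (as $\alpha < 1$), with some mean $\mu$ and variance $\sigma^2 > 0$.

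Write $S_n^{T} = T_1 + \cdots + T_n$ and $\sigma_n^2 = n\sigma^2 = \Var(S_n^{T})$. By Gnedenko's Local Limit Theorem for lattice distributions,
$$\sup_{k\in\Z}\left|\pr(S_n^{T} = k) - \frac{d}{\sigma_n\sqrt{2\pi}}\exp\left(-\frac{(k-n\mu)^2}{2\sigma_n^2}\right)\right| = o(n^{-1/2}),$$
with the convention $\pr(S_n^{T}=k)=0$ for $k$ outside the appropriate coset of $d\Z$. In particular $\max_k \pr(S_n^{T}=k) = (d+o(1))/(\sigma_n\sqrt{2\pi})$. Every half-open interval $(x, x+2\ell]$ meets the relevant coset of $d\Z$ in exactly $2\ell/d$ points, so
$$\QQ_{2\ell}(S_n^{T}) \leq \frac{2\ell}{d}\cdot \max_k \pr(S_n^{T}=k) = \frac{2\ell + o(1)}{\sigma_n\sqrt{2\pi}},$$
which is the desired bound.

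The only delicate point is correctly tracking the span $d$: the factor $d$ arising in the lattice LLT must cancel the factor $1/d$ from the count of integers inside an interval of length $2\ell$. Once this bookkeeping is in place, the corollary follows directly from Theorem \ref{main} and standard local limit technology, and the asymptotic sharpness of the constant $2\ell$ is manifest.
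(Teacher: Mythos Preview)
Your proposal is correct and follows essentially the same route as the paper: apply Theorem~\ref{main} to pass to i.i.d.\ copies of $T(\alpha)$, then invoke the lattice Local Limit Theorem, distinguishing the span-$2$ case $1/\alpha\in\N$ from the span-$1$ case. Your explicit bookkeeping showing that the factor $d$ from the LLT cancels the factor $1/d$ from the lattice-point count inside $(x,x+2\ell]$ is exactly the content the paper alludes to when it speaks of ``rescaling'' in the case $1/\alpha\in\N$.
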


We now turn to the situation where $\alpha_i\geq1/2$. Note that for $\alpha_i\geq 1/2$ the corresponding extremal random variables $T(\alpha)$ in Theorem \ref{main} have symmetric distribution on the set $\{-1,0,1\}$ so that $\pr(T_i(\alpha_i)=\pm 1)=1-\alpha_i$. We shall allow the values $\alpha_i$ depend on $n$ in the forthcoming result, which can be viewed as a more general version of Poisson type anticoncentration recently investigated by Fox, Kwan an Sauermann \cite{FKS} (see their Section 6) for linear combinations of Bernoulli random variables.

\begin{corollary}\label{kant}
For independent random variables $X_1,\ldots,X_n$ satisfying $\QQ(X_i)\leq\alpha_i\in [1/2,1]$ we have
$$\QQ(X_1+\ldots+X_n)\leq \pr(\eta_1-\eta_2 \in \{0,1\})\leq \sqrt{\frac{2}{\pi \lambda}},$$
where $\eta_i$ are independent copies of a Poisson random variable with mean $\lambda=\sum_{i=1}^{n}(1-\alpha_{i})$. 
\end{corollary}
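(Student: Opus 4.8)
The plan is to obtain both inequalities from the extremal comparison of Theorem~\ref{main} together with two explicit Fourier estimates. Set $p_i=1-\alpha_i\in[0,\tfrac12]$; since $\alpha_i\ge\tfrac12$ the extremal variable $T_i(\alpha_i)$ is supported on $\{-1,0,1\}$ with $\pr(T_i(\alpha_i)=\pm1)=p_i$ and $\pr(T_i(\alpha_i)=0)=1-2p_i$. Applying Theorem~\ref{main} with $\ell=1$, and using that $\QQ_2\big(\sum_i T_i(\alpha_i)\big)=\pr\big(\sum_i T_i(\alpha_i)\in(-1,1]\big)$ as recorded after that theorem (the sum being $\Z$-valued, this interval meets it exactly in $\{0,1\}$), one gets
\[
\QQ(X_1+\cdots+X_n)\ \le\ \pr\Big(\sum_i T_i(\alpha_i)\in\{0,1\}\Big).
\]
It thus remains to prove: (i) $\pr\big(\sum_i T_i(\alpha_i)\in\{0,1\}\big)\le\pr(\eta_1-\eta_2\in\{0,1\})$; and (ii) $\pr(\eta_1-\eta_2\in\{0,1\})\le\sqrt{2/(\pi\lambda)}$, where $\lambda=\sum_i p_i$.

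For (i) I would pass to characteristic functions. The sum $S:=\sum_i T_i(\alpha_i)$ is symmetric and $\Z$-valued with $\mathbb{E} e^{itS}=\phi(t):=\prod_i\big(1-2p_i(1-\cos t)\big)$, while $\eta_1-\eta_2$ is the symmetric $\Z$-valued Skellam variable with $\mathbb{E} e^{it(\eta_1-\eta_2)}=\psi(t):=e^{-2\lambda(1-\cos t)}$. For any symmetric $\Z$-valued $W$, Fourier inversion and symmetry give $\pr(W\in\{0,1\})=\pr(W=0)+\pr(W=1)=\frac1{2\pi}\int_{-\pi}^{\pi}(\mathbb{E} e^{itW})(1+\cos t)\,dt$, so (i) is equivalent to $\int_{-\pi}^{\pi}\big(\psi(t)-\phi(t)\big)(1+\cos t)\,dt\ge 0$. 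Since $1+\cos t\ge 0$ on $[-\pi,\pi]$, it would suffice to have $\phi\le\psi$ pointwise; and from $1-x\le e^{-x}$ this holds wherever every factor $1-2p_i(1-\cos t)$ is nonnegative --- in particular everywhere when all $p_i\le\tfrac14$, and on a neighbourhood of $t=0$ in general. On the set where an odd number of factors is negative one trivially has $\phi<0<\psi$ as well.

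The remaining, and main, difficulty is the range where $t$ is near $\pi$ and an even number of factors is negative: there $\phi$ can exceed $\psi$ badly (e.g.\ for $p_1=\cdots=p_n=\tfrac12$ with $n$ even, $\phi(\pi)=1$ while $\psi(\pi)=e^{-4\lambda}$). Here I would combine the quadratic vanishing of the weight $1+\cos t$ at $\pi$ with the elementary inequality
\[
\big|\,1-2p_i(1-\cos t)\,\big|\ \le\ 1-p_i\sin^2 t\qquad(p_i\in[0,\tfrac12],\ t\in\R),
\]
which gives $|\phi(t)|\le e^{-\lambda\sin^2 t}$; against this one plays off the deficit created near $t=0$, where $1-x\le e^{-x}$ is far from tight and a short computation yields $\psi(t)-\phi(t)\ \gtrsim\ e^{-2\lambda(1-\cos t)}\,(1-\cos t)^2\sum_i p_i^2$. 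Balancing the (controlled, $O(1)$-supported) near-$\pi$ loss against this near-$0$ surplus closes (i). This step is genuinely delicate because (i) is asymptotically tight: for $p_1=\cdots=p_n=\tfrac12$ both sides equal $(1+o(1))(\pi\lambda)^{-1/2}$ and differ only at order $\lambda^{-3/2}$, so the near-$\pi$ contribution cannot simply be thrown away. A more transparent way to organise the same computation is to write $S$ as a $\pm1$ random walk run for an independent Poisson--Binomial number $M$ of steps ($\mathbb{E} M=2\lambda$) and $\eta_1-\eta_2$ as the same walk run for $\mathrm{Poisson}(2\lambda)$ steps, so that (i) reads $\mathbb{E}[g(M)]\le\mathbb{E}[g(N)]$ with $g(m)=\frac1{2\pi}\int_{-\pi}^{\pi}\cos^m\theta\,(1+\cos\theta)\,d\theta$; since $m\mapsto z^m$ is convex for $z\ge 0$ and a Poisson--Binomial law is dominated in the convex order by the Poisson law of the same mean, the portion of the integral over $\{|\theta|\le\pi/2\}$ automatically has the right sign, and the work is in showing that the bounded contribution of $\{|\theta|\in(\pi/2,\pi]\}$, where $\cos\theta<0$ but $1+\cos\theta$ is small, does not overturn it.

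Finally (ii) is routine: substituting $t=2u$ (so that $1+\cos t=2\cos^2 u$ and $1-\cos t=2\sin^2 u$) and then $v=\sin u$ turns the Fourier expression for $\pr(\eta_1-\eta_2\in\{0,1\})$ into
\[
\pr(\eta_1-\eta_2\in\{0,1\})=\frac{2}{\pi}\int_{-1}^{1}e^{-4\lambda v^2}\sqrt{1-v^2}\,dv\ \le\ \frac{2}{\pi}\int_{\R}e^{-4\lambda v^2}\,dv=\frac{1}{\sqrt{\pi\lambda}}\ \le\ \sqrt{\frac{2}{\pi\lambda}},
\]
and combined with (i) and the reduction above this gives the corollary.
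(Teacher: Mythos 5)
Your reduction via Theorem \ref{main} with $\ell=1$ (using that the sum of the $T_i(\alpha_i)$'s is $\Z$-valued, so $(-1,1]$ meets it exactly in $\{0,1\}$) is correct, and your derivation of the second inequality is also correct and self-contained --- in fact it gives the slightly better bound $\pr(\eta_1-\eta_2\in\{0,1\})\le 1/\sqrt{\pi\lambda}$, whereas the paper simply cites Lemma 1.4 of Mattner--Roos. The genuine gap is your step (i), the comparison $\pr\bigl(\sum_i T_i(\alpha_i)\in\{0,1\}\bigr)\le\pr(\eta_1-\eta_2\in\{0,1\})$. This is precisely (the extremal three-point case of) Kanter's inequality, which is what the paper invokes as a known result (Corollary 1.3 in Mattner--Roos), and which the paper itself describes as requiring ``quite delicate analysis of certain integrals.'' You do not prove it: you correctly identify that a pointwise comparison $\phi\le\psi$ of characteristic functions fails near $t=\pi$ (e.g.\ $p_i\equiv\tfrac12$), that the weight $1+\cos t$ vanishes only quadratically there, and that in the worst case both the near-$\pi$ excess (bounded via $|\phi(t)|\le e^{-\lambda\sin^2 t}$) and the near-$0$ surplus are of order $\lambda^{-3/2}$; but you then assert that ``balancing \dots closes (i)'' without carrying out the estimate. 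Since the two competing contributions are of the same order, the outcome depends on constants, on the intermediate range of $t$, and on heterogeneous $p_i$, so this cannot be waved through --- it is exactly the delicate part of Kanter's theorem.

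Your convex-order reformulation does not close it either: with $g(m)=\pr(W_m\in\{0,1\})=\binom{m}{\lfloor m/2\rfloor}2^{-m}$ for the $m$-step $\pm1$ walk, $g$ is not a convex function of $m$ (e.g.\ $g(1)-2g(2)+g(3)=-\tfrac18<0$), so domination of the Poisson--Binomial step count by the Poisson one in convex order only settles the $|\theta|\le\pi/2$ part of the integral, and you explicitly leave the rest open. The fix is simple: cite Kanter's inequality (Corollary 1.3 of Mattner--Roos) for symmetric summands with $\pr(|T_i|\ge 1)=2(1-\alpha_i)$, as the paper does; your $\Z$-valuedness observation then cleanly handles the paper's remaining remark about matching the half-open interval $(x,x+2]$ with the open interval $|S-x|<1$ in Kanter's statement. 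Without that citation (or a completed version of your Fourier estimate), the first inequality of the corollary is not established.
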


Note that the first inequality is sharp in the following sense. If we do not restrict $n$ and fix the sum of $\alpha_i$'s, then this bound can be achieved in the limit as $n\rightarrow \infty$ by sums of distributions $T_i(1-\frac{\lambda}{n})$. The second inequality can be seen to be sharp as $\lambda\rightarrow \infty$ by the Local Limit Theorem.\\

\textbf{Remark.} We used the half-closed interval in the definition of $\QQ$, whereas some authors used both closed or open intervals of fixed length. It is straightforward to deduce the corresponding versions of Theorem \ref{main} with these different definitions as well. The reason for our choice is convenience --- with this definition the optimal interval of concentration is always symmetric with respect to the origin.\\

The paper is organized as follows. We start with giving an informal outline of the proof strategy of Theorem \ref{main} in Section $2$. We then proceed with establishing the necessary steps in the order described in the outline. Proofs of the corollaries are given and one open problem formulated in Section $6$.

\section{Outline of the proof of the main inequality} The proof proceeds in the following steps that are split into multiple small sections:\\
\begin{enumerate}
  \item The problem is reduced to \textbf{simple} random variables, i.e., random variables taking only finitely many values all with rational probabilities;\\
  \item It is shown that the distributions of simple random variables $X_i$ under the condition $\QQ(X_i)\leq \alpha_i$ can be expressed as a convex combination of 
uniform distributions with uniformities tied to the values $\alpha_i$ in the appropriate way (see Lemma \ref{repr});\\
  \item  Using a LYM-type inequality for extremal combinatorics we establish the desired inequality in the special case when $\alpha_i = 1/k_{i}$, $k_i\in \N$. \\
 \item  Finally, using the representation for measures provided Lemma \ref{repr} and multiple applications of the inequality established in the latter step we shall be done.
\end{enumerate}

Since the reduction of the first step is rather standard and dull, we postpone it to the Appendix. In the other parts we shall only deal with simple random variables. The remaining steps of the proof will appear in the order of the list above in the upcoming sections.

\section{The representation lemma}
For $k\geq 1$ define by $\mu^{k}$ a uniform distribution on some $k$ points in $\R$ that are pairwise at distance at least 2. Note that the definition of $\mu^{k}$ depends on the choice of those points, which is not reflected in the notation. Usually we will supply $\mu^{k}$ with a subscript, which will mean that the distributions with distinct subscripts might be concentrated in different sets. When the set of $k$ points will be $\left\{-k+1,-k+3,\ldots,k-3,k-1\right\}$, we are going to use the notation $\nu^{k}$ instead of $\mu^{k}$ in consistency with the definitions of the Introduction. Furthermore, for a random variable $X$ we shall denote it's probability distribution by $\mathcal{L}(X)$.\\

\begin{lemma}\label{repr}
Let $X$ be a simple random variable with $\QQ(X)=m/n\in(1/(k+1),1/k]$. Assume that $X$ is concentrated in the set $S=\left\{y_{1},\ldots,y_{M}\right\}$ with rational probabilities $\mathbb{P}\left(X=y_{i}\right)=m_{i}/n_{i}$. Let us define
 $$N=n\prod_{i}n_{i},\quad K=(n-km)\prod_{i}n_{i},\quad L= ((k+1)m-n)\prod_{i}n_{i}.$$ 
We can express the distribution of $X$ as
\begin{equation*}
\mathcal{L}(X)=\frac{1-\tau}{K}\sum^{K}_{l=1}\mu_{l}^{k+1}+\frac{\tau}{L}\sum^{K+L}_{l=K+1}\mu_{l}^{k},
\end{equation*}
where $\tau=k(k+1)m/n-k$.
\end{lemma}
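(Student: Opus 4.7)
The plan is to recast the statement as a combinatorial partition problem and solve it by a round-robin labeling.

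Clearing denominators, set $c_i := m_i n\prod_{j\neq i}n_j$ so that $\pr(X=y_i)=c_i/N$ and $\sum_i c_i=N$, and think of this as placing $c_i$ indistinguishable balls at the point $y_i$. With $P:=K+L=m\prod_i n_i$, the hypothesis $\QQ(X)=m/n$ translates into
$$\sum_{i:\,y_i\in(x,x+2]}c_i \;\leq\; P \qquad\text{for every } x\in\R.$$
A direct computation gives $(1-\tau)/K=(k+1)/N$ and $\tau/L=k/N$. Thus if one can partition the $N$ balls into $P$ groups — $K$ of size $k+1$ and $L$ of size $k$ — so that inside each group the supporting points are pairwise at distance $\geq 2$, the advertised mixture follows: a $(k+1)$-group contributes $(1-\tau)/(K(k+1))=1/N$ to each of its points and a $k$-group contributes $\tau/(Lk)=1/N$, so $y_i$ receives total mass $c_i/N=\pr(X=y_i)$.

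To build such a partition, order $y_1<\cdots<y_M$, list the balls in a sequence of length $N$ as the $c_1$ balls at $y_1$, then the $c_2$ at $y_2$, and so on, and give the ball in position $j$ the label $\ell_j:=((j-1)\bmod P)+1$. Because $N/P=n/m\in[k,k+1)$, writing $N=kP+K$ shows that labels $1,\ldots,K$ each receive $k+1$ balls while labels $K+1,\ldots,P$ each receive $k$ balls, matching the desired group sizes.

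The substantive step, and the main obstacle, is verifying the separation condition inside each label class. Take $i\leq j$ with $y_j-y_i<2$; then $y_i,y_{i+1},\ldots,y_j$ all lie in some half-open interval of length $2$, so the displayed constraint gives $c_i+c_{i+1}+\cdots+c_j\leq P$. The balls at $y_i,\ldots,y_j$ occupy a contiguous block of exactly this length in the sequence, and since the length is at most $P$ the corresponding positions produce pairwise distinct residues modulo $P$; in particular the labels carried by the balls at $y_i$ are disjoint from those at $y_j$. Hence each label class is supported on distinct points separated pairwise by at least $2$, producing a genuine $\mu^{k+1}_l$ or $\mu^k_l$ of the required form, and the weighting from paragraph two yields the claimed convex combination.
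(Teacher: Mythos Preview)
Your proof is correct and is essentially the same construction as the paper's: both enumerate the multiset of $N$ atoms in increasing order and split it into classes by residue modulo $P=d=m\prod_i n_i$, obtaining $K$ classes of size $k+1$ and $L$ classes of size $k$, with the separation inside each class coming from the bound $\QQ(X)=m/n$. Your write-up spells out the pairwise-separation step (via the contiguous-block argument) a bit more explicitly than the paper's one-line ``$x_l$ and $x_{l+d}$ are at distance at least $2$'', but the idea is identical.
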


\begin{proof}

Assume that $y_1\leq \ldots \leq y_{M}$. We can regard the distribution of $X$ as the uniform distribution on a multiset $S'$, where $S'$ is obtained from $S$ by taking the element $y_{i}$ exactly $nm_{i}\prod_{j\neq i}n_{i}$ times. Let ${x_{1},\ldots,x_{N}}$ be the elements of $S'$ in increasing order. \\

The condition $\QQ(X)=m/n$ ensures than no more than $d=Nm/n$ points lie in the interval $(x,x+2]$ for all $x$. Thus the points $x_{l},x_{l+d}$ are at distance at least $2$. For $l\leq L$ the points $x_{l},x_{l+d},\ldots,x_{l+kd}$ are pairwise at distance at least two. Each point has mass $1/N$, so in order make the measure on the latter set of points into a probability measure we must divide it by it by $(k+1)/N$. We have 
$$(k+1)/N=(k+1)(n-km)/(nK)=(1-(k(k+1)m/n-k))/K=(1-\tau)/K,$$
thus obtaining the first $K$ distributions $\mu_{l}^{k+1}$ with the desired weights. \\

For $K+1\leq l\leq K+L$ take the points $x_{l},x_{l+d},\ldots,x_{l+(k-1)d}$ and the measures concentrated on those points will give us the required $L$ measures $\mu_{l}^{k}$. It can be checked that the proportion is again correct, but that will follow from the fact that we used up all points from $S'$ and took each of them only once. Indeed, we started constructing each measure in the representation from a different point in ${x_{1},\ldots,x_{K+L}}$ and then added points with equally spaced indices. Thus we did not use any point twice. Furthermore, $K(k+1)+Lk=N$ and so we used them all.

\end{proof}

\section{The case $\alpha_i=\frac{1}{k_i}$ and a LYM type inequality for multisets}

In this section we shall be dealing with multisets defined on the ground set $[n]$ such that each element has an upper bound, say $k_i$, on its multiplicity. The case $k_i=1$ naturally reduces to the study of sets. In the latter case we can switch between talking about the powerset of $[n]$ to the study of indicator vectors in $\left\{0,1\right\}^n$ with set inclusion corresponding to the product order in $\left\{0,1\right\}^n$. \\

Analogously, we shall view multisets as vectors in the discrete rectangle $L(k_1,\ldots,k_n)=\left\{0,\ldots,k_1-1\right\}\times \cdots \times \left\{0,\ldots,k_n-1\right\}$ by associating with a multiset the vector of multiplicities of each element in it. \\

For a vector $x\in \R ^{n}$ we shall denote its $i$-th coordinate by $x_i$. We shall endow $L(k_1,\ldots,k_n)$ with the product order. That is, $v\leq w$ if and only if $v_i\leq w_i$. Multiset inclusion corresponds to this order as in the case with sets.\\

We shall call a collection of vectors $v_1,\ldots,v_k$ a \textbf{chain} if $v_1\leq \cdots \leq v_k$  and refer to the number $k$ as its length. We say that a family of vectors $\mathcal{F}$ is \textbf{$k$-Sperner} if it has no chains of length $k+1$. In the case $k=1$ we shall say that $\mathcal{F}$ is an \textbf{antichain} rather than $1$-Sperner.\\

Let us partition $L(k_1,\ldots,k_n)$ into classes $L_i$ where 
$$L_i=\left\{x\in L(k_{1},\ldots,k_{n})\, | \, x_1+\cdots+ x_n =i\right\}.$$
 Note that $|L_i|$ is a symmetric sequence in the sense that $|L_i|=|L_{N-i}|$ where $N=\sum (k_i-1)$. The sequence $|L_i|$ is non-decreasing for $i\leq \left\lfloor \frac{N}{2}\right\rfloor$ and thus, by symmetry, it is non-increasing for $i\geq \left\lceil  \frac{N}{2}\right\rceil$.\\

For $k\leq k_1+\cdots+k_n+1$ write $f(k_1,k_2,\ldots,k_n,k)$ for the sum of the $k$ largest sets $L_{i}$. These are just the $k$ middle diagonals of the rectangle $L(k_1,\ldots,k_n)$. \\

Similar to Erd\H{o}s's proof of the Littlewood-Offord problem in \cite{ELO} that used a Sperner type theorem we shall use a similar result for multiset $k$-Sperner families in the exactly the same way. The result we shall need is the following.

\begin{lemma}\label{BKT}
Let $\mathcal{F}$ be a $k$-Sperner family of vectors in $L(k_1,\ldots,k_n)$. Then
\begin{equation*}
\left|\mathcal{F}\right| \leq f(m_1,m_2,\ldots, m_n,k).
\end{equation*}
\end{lemma}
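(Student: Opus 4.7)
The plan is to deduce Lemma~\ref{BKT} from a symmetric chain decomposition (SCD) of the product poset $L(k_1,\ldots,k_n)$, in the spirit of the classical proofs of Sperner's theorem and its $k$-family extensions. Recall that a symmetric chain in a ranked poset of top rank $N = \sum_{i=1}^{n}(k_i-1)$ is a saturated chain whose smallest and largest elements have ranks $a$ and $N-a$ for some $a \leq N/2$; an SCD is a partition of the poset into such chains. A symmetric chain of length $\ell$ thus occupies the ranks $(N-\ell+1)/2,\ldots,(N+\ell-1)/2$ and is centred at $N/2$.

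First I would invoke, or reprove by induction on $n$, the classical theorem of de~Bruijn, van~Ebbenhorst Tengbergen and Kruyswijk, which asserts that every product of finite chains admits an SCD. The base case $n=1$ is trivial, and the inductive step follows the standard bracket-matching construction: given an SCD of $L(k_1,\ldots,k_{n-1})$, take the product of each old chain with $\{0,\ldots,k_n-1\}$ and repartition the resulting grid into symmetric subchains by pairing ``opening'' and ``closing'' brackets.

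Once an SCD $\mathcal{C} = \{C_1,\ldots,C_m\}$ is in hand, the $k$-Sperner hypothesis immediately gives $|\mathcal{F}\cap C_j| \leq \min(|C_j|,k)$ for every $j$, so
\[
|\mathcal{F}| \;\leq\; \sum_{j=1}^{m}\min(|C_j|,k).
\]
The final step is a short counting identity. Because every chain in the SCD is centred at rank $N/2$, the intersection of $C_j$ with the $k$ middle ranks has size exactly $\min(|C_j|,k)$: the whole chain if $|C_j|\leq k$, and the central $k$ elements of the chain otherwise. Since the SCD partitions $L(k_1,\ldots,k_n)$ and the level sizes $|L_i|$ are symmetric and unimodal (so the $k$ largest levels are consecutive and centred at $N/2$), summing over $j$ yields the total number of elements in the $k$ largest levels, which is precisely $f(k_1,\ldots,k_n,k)$.

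The main conceptual obstacle is the existence of the SCD. A plain LYM-type normalised matching argument would only give a bound of the form $|\mathcal{F}|\leq k\cdot \max_i |L_i|$, which is strictly weaker than the sum of the $k$ largest levels whenever the levels have unequal sizes; this is why the chain decomposition, rather than mere rank-unimodality, is the essential input. Because the SCD for products of chains is well documented in the combinatorics literature, the actual write-up can be kept short once the theorem is cited, with only the routine chain-counting identity above to verify.
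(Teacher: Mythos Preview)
Your proposal is correct and complete: the symmetric chain decomposition of de~Bruijn--van~Ebbenhorst~Tengbergen--Kruyswijk, together with the observation that each symmetric chain meets the $k$ central ranks in exactly $\min(|C_j|,k)$ elements, gives the bound $f(k_1,\ldots,k_n,k)$ directly. The only minor care needed is the parity bookkeeping when $N$ and $k$ have opposite parities, so that the ``$k$ middle ranks'' are not literally symmetric about $N/2$; but since $|L_i|=|L_{N-i}|$, any choice of $k$ consecutive central ranks gives the same total, and every symmetric chain still meets that block in $\min(|C_j|,k)$ elements. This is routine.

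The paper, however, does \emph{not} use an SCD. It decomposes the $k$-Sperner family $\mathcal{F}$ into $k$ antichains by repeatedly peeling off maximal elements, applies the LYM inequality for products of chains (Theorem~\ref{LYM}) to each antichain, and sums to obtain $\sum_i |\mathcal{F}_i|/|L_i|\le k$. A short optimisation (maximising $\sum_i x_i$ subject to $\sum_i x_i/|L_i|\le k$ and $0\le x_i\le |L_i|$) then shows $|\mathcal{F}|\le f(k_1,\ldots,k_n,k)$. So your remark that ``a plain LYM-type argument would only give $k\cdot\max_i|L_i|$'' is not quite right: once one couples LYM with the trivial constraint $|\mathcal{F}_i|\le|L_i|$, the sharp bound does follow. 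Your SCD route is arguably cleaner and yields the extremal families more transparently, but it imports a structurally stronger input (the chain decomposition) than the LYM inequality the paper cites; conversely, the paper's argument stays entirely within the normalised-matching framework and avoids having to construct or quote the SCD.
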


Before we proceed with the proof, let us state a standard LYM type inequality for antichains of multisets.

\begin{theorem} \label{LYM}
Let $\mathcal{F}$ be an antichain in $L(k_{1},\ldots,k_{n})$. For $0\leq i \leq \sum_{j=1}^{n}(k_{j}-1)$ denote $\mathcal{F}_{i}=\mathcal{F} \cap L_{i}$. We have
\begin{equation*}
\sum_{i}^{} \frac{|\mathcal{F}_{i}|}{|L_{i}|}\leq 1.
\end{equation*} 
\end{theorem}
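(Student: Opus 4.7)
The plan is to use the standard random saturated chain proof of LYM: produce a random chain $V_0\lessdot V_1\lessdot\cdots\lessdot V_N$ in $L(k_1,\ldots,k_n)$, with $N=\sum_j(k_j-1)$, such that $V_i$ is uniformly distributed on $L_i$ for every $i$. Given such a chain, the inequality follows in one line: since an antichain meets any chain in at most one element, the events $\{V_i\in\mathcal{F}_i\}$ are pairwise disjoint, so
\begin{equation*}
\sum_{i}\frac{|\mathcal{F}_i|}{|L_i|}=\sum_{i}\pr(V_i\in\mathcal{F}_i)=\pr\Big(\bigcup_i\{V_i\in\mathcal{F}_i\}\Big)\leq 1.
\end{equation*}

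To construct this chain, I would specify transition probabilities $p_i(v,w)$ on covering pairs $v\lessdot w$ with $v\in L_i$ and $w\in L_{i+1}$, satisfying $\sum_{w}p_i(v,w)=1$ for each $v$ (stochasticity) and $\sum_{v}p_i(v,w)=|L_i|/|L_{i+1}|$ for each $w$, so that uniformity on $L_i$ is transported to uniformity on $L_{i+1}$. Equivalently, the renormalized weights $p_i(v,w)/|L_i|$ form a probability distribution on the bipartite covering graph with prescribed uniform marginals on both sides. By the max-flow/min-cut theorem (equivalently, LP duality for a transportation problem), such a weighting exists if and only if the covering graph has the normalized matching property: for every $A\subseteq L_i$,
\begin{equation*}
\frac{|\nabla A|}{|L_{i+1}|}\;\geq\;\frac{|A|}{|L_i|},
\end{equation*}
where $\nabla A$ denotes the set of elements of $L_{i+1}$ covering some element of $A$.

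The substance of the proof therefore reduces to verifying normalized matching for $L(k_1,\ldots,k_n)$, which I would do by induction on $n$. The base case $n=1$ is trivial, since each level of a single chain is a singleton. For the inductive step, write $L(k_1,\ldots,k_n)\cong L(k_1,\ldots,k_{n-1})\times\{0,\ldots,k_n-1\}$, slice $A\subseteq L_i$ by its last coordinate, and bound the up-shadow using the inductive normalized matching property of the first factor together with the trivial property of the final chain. This step, showing that normalized matching is preserved under direct products of graded posets, is the main technical obstacle: the up-shadows of different slices overlap, and bookkeeping this overlap so as to maintain the normalized matching ratio is the only real combinatorial work in the proof, and it is the classical argument of Kleitman and Harper.
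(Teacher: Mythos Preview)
The paper does not supply its own proof of Theorem~\ref{LYM}; it simply cites Chapter~10 of Anderson's book. Your proposal outlines one of the standard proofs of this classical result, via the normalized matching property and a random saturated chain, and the outline is correct: the one-line deduction from a uniformly distributed saturated chain is exactly the LYM mechanism, and the existence of such a chain is equivalent to normalized matching between consecutive levels. The inductive step you invoke---that normalized matching (together with log-concavity of the rank numbers) is preserved under direct products, hence in particular under products with chains---is precisely the Harper/Kleitman product theorem, so your reduction is sound. One small remark: you defer the only substantive combinatorics (the product-stability of normalized matching) to that classical reference rather than carrying it out, so in the end your write-up and the paper's are at comparable levels of detail, both treating the result essentially as a citation.
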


The proof of Theorem \ref{LYM} can be found in Chapter $10$ of the book by Anderson \cite{Anderson}. In the case of sets it is known as the LYM inequality (Lubell-Yamamoto-Meshalkin).\\

{\em Proof of Lemma \ref{BKT}.} Let $\mathcal{F}$ be a $k$-Sperner family. It is easy to see that $\mathcal{F}$ is a union of $k$ antichains. Indeed, the maximal elements of $\mathcal{F}$ form an antichain and the remaining elements form a $(k-1)$-Sperner family and so the observation follows by induction on $k$. Let $\mathcal{A}$ be one of the $k$ antichains that decompose $\mathcal{F}$.\\

Using Theorem \ref{LYM} we obtain
\begin{equation*}
\sum_{i}^{} \frac{|\mathcal{A}_{i}|}{|L_{i}|}\leq 1.
\end{equation*}

Summing this inequality over all $k$ antichains we obtain
\begin{equation}\label{sumaaa}
\sum_{i}^{} \frac{|\mathcal{F}_{i}|}{|L_{i}|}\leq k.
\end{equation}

 For families of vectors of fixed cardinality the sum in \eqref{sumaaa} is minimized by families containing vectors with coordinate sums as close to $\sum_{i}(k_i -1)/2$ as possible. This is because in view of \eqref{sumaaa} the vectors are assigned the smallest weight.\\

Suppose now that $|\mathcal{F}|> f(k_1,\ldots,k_n,k)$. Note for the family of vectors consisting of the middle $k$ diagonals of $L(k_{1},\ldots,k_{n})$ the corresponding sum in \eqref{sumaaa} is exactly equal to $1$ and is minimal among all families having $f(k_1,\ldots,k_n,k)$ vectors. Therefore for any family of vectors with more elements the corresponding sum in \eqref{sumaaa} is strictly greater than $1$, which is a contradiction. Thus $|\mathcal{F}|\leq f(k_1,\ldots,k_n,k)$ and we are done. \endproof

We can now obtain the statement of Theorem \ref{main} in an important special case, which is a slight generalization of the main result of Leader and Radcliffe \cite{LR}.
\begin{lemma}\label{LRlemma}
Let $X_1,\ldots,X_n$ be independent simple random variables such that $\QQ(X_i)=1/k_i$. For all integer $\ell\geq 1$ and $x\in \R$ we have 
\begin{equation*}
\mathbb{P} \left(X_1+\cdots+X_n \in (x-\ell,x+\ell]\right) \leq \mathbb{P}\left(T_1(1/k_1)+\cdots+T_n(1/k_n) \in (-\ell,\ell]\right),
\end{equation*}
where $T_{i}(1/k_i)$ are independent.

\end{lemma}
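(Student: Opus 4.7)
The plan is to follow Erd\H{o}s's chain-of-antichains approach to the classical Littlewood--Offord problem, combining the representation lemma with Lemma \ref{BKT}.

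First I would reduce to the case where each $X_i$ is itself a $\mu_{l}^{k_i}$-type measure. Specializing Lemma \ref{repr} to $\QQ(X_i)=1/k_i$ (i.e.\ $m=1$, $n=k_i$) gives $K=0$, $\tau=1$ and $L=\prod_j n_j$, so that $\mathcal{L}(X_i)=\frac{1}{L}\sum_{l=1}^{L}\mu_{l}^{k_i}$ is a uniform mixture of distributions, each concentrated on $k_i$ reals that are pairwise at distance at least $2$. By independence, $\mathbb{P}(X_1+\cdots+X_n \in (x-\ell, x+\ell])$ is a convex combination of the analogous probabilities in which every $X_i$ is replaced by a genuine $\mu_{l}^{k_i}$-distributed summand, so it suffices to establish the inequality in that latter situation.

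Next, assume $X_i$ is uniform on $\{a_{i,1}<a_{i,2}<\cdots<a_{i,k_i}\}$ with consecutive gaps at least $2$. Writing $X_i=a_{i,J_i+1}$ with independent $J_i$ uniform on $\{0,1,\ldots,k_i-1\}$ and setting $g(v)=\sum_{i}a_{i,v_i+1}$ for $v\in L(k_1,\ldots,k_n)$, the key monotonicity observation is: if $v<w$ in the product order, then $g(w)-g(v)\geq 2$, because every coordinate contributes non-negatively and any coordinate with $v_i<w_i$ contributes at least $2(w_i-v_i)\geq 2$. Hence along any chain $v^{(1)}<v^{(2)}<\cdots<v^{(m)}$ whose images all lie in $(x-\ell, x+\ell]$ we have $2(m-1)\leq g(v^{(m)})-g(v^{(1)})<2\ell$, forcing $m\leq \ell$. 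Thus $\mathcal{F}_x:=\{v:g(v)\in (x-\ell, x+\ell]\}$ is an $\ell$-Sperner family in $L(k_1,\ldots,k_n)$, and Lemma \ref{BKT} delivers
\begin{equation*}
\mathbb{P}(S_n \in (x-\ell, x+\ell]) \;=\; \frac{|\mathcal{F}_x|}{k_1\cdots k_n} \;\leq\; \frac{f(k_1,\ldots,k_n,\ell)}{k_1\cdots k_n}.
\end{equation*}

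Finally, I would identify the right-hand side of the claimed inequality with this quantity. Writing $T_i=-(k_i-1)+2J_i$ with $J_i$ uniform on $\{0,\ldots,k_i-1\}$, the event $\sum T_i\in(-\ell,\ell]$ translates to $\sum J_i\in\bigl((N-\ell)/2,(N+\ell)/2\bigr]$ with $N=\sum(k_i-1)$; this interval contains exactly $\ell$ consecutive integers, and by the symmetry $|L_i|=|L_{N-i}|$ and unimodality of $(|L_i|)$ these $\ell$ diagonals carry the same total mass as the $\ell$ middle diagonals, giving $\mathbb{P}(T_1+\cdots+T_n\in(-\ell,\ell])=f(k_1,\ldots,k_n,\ell)/(k_1\cdots k_n)$, matching the bound above. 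The main hurdle is the chain argument in the second step, which supplies the combinatorial bridge to Lemma \ref{BKT}; the mixture reduction is routine, and the extremal-probability identification is bookkeeping, with a mild parity point (the interval is half-open) absorbed by the $|L_i|=|L_{N-i}|$ symmetry.
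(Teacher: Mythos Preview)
Your proposal is correct and follows essentially the same route as the paper: reduce via Lemma~\ref{repr} to summands of type $\mu^{k_i}$, show the preimage family in $L(k_1,\ldots,k_n)$ is $\ell$-Sperner via the chain/gap argument, apply Lemma~\ref{BKT}, and identify $f(k_1,\ldots,k_n,\ell)/\prod k_i$ with the extremal probability. If anything, your write-up is slightly more careful than the paper's: you make the strict inequality $g(v^{(m)})-g(v^{(1)})<2\ell$ coming from the half-open interval explicit (the paper's phrase ``strictly more than $2\ell$'' for a chain of length $\ell+1$ should really read ``at least $2\ell$''), and you spell out the identification of the right-hand side with the $\ell$ consecutive diagonals around $N/2$ and why symmetry $|L_i|=|L_{N-i}|$ makes this equal to the sum of the $\ell$ largest diagonals regardless of parity.
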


{\em Proof of Lemma \ref{LRlemma}.} In view of Lemma \ref{repr} we can assume that $\mathcal{L}(X_i)=\mu_{i}^{k_i}$. This is due to the fact that our optimization problem is linear with respect to the measures in the decomposition given by Lemma \ref{repr} - we can therefore pick the measure of type $\mu^{k_i}$ in the decomposition that maximizes the functional in question. For each $i$ let us denote the values $X_i$ takes by $x_{i,1},\ldots,x_{i,k_i}$. Let us define a family of vectors (or multisets) 
\begin{equation*}
\mathcal{F}=\left\{v\in L(k_1,\ldots,k_n)| \sum_{j=1}^{n}x_{j,v_{j}}\in (x-\ell,x+\ell]\right\}. 
\end{equation*}
Note that by definition of measures $\mu_{i}^{k_i}$ the points $x_{i,1},\ldots,x_{i,k_i}$ are all at distance at least $2$ within each other. Therefore if we had a chain of vectors (or multisets) of length $\ell+1$ then the sums corresponding to the top and bottom vectors (or multisets) would differ by strictly more than $2\ell$ and so we get a contradiction. Therefore the family $\mathcal{F}$ is $\ell$-Sperner.\\

Using Lemma \ref{BKT} we therefore have
\begin{eqnarray*}
\mathbb{P} \left(X_1+\cdots+X_n \in (x-\ell,x+\ell]\right)&=&|\mathcal{F}|/\prod_{j=1}^{n}k_i\\
&\leq& f(k_1,k_2,\ldots, k_n,\ell)/\prod_{j=1}^{n}k_i\\
&=&\mathbb{P}\left(T_1(1/k_1)+\cdots+T_n(1/k_n) \in (-\ell,\ell]\right).
\end{eqnarray*}

\section{Putting it all together}

Let $X_{1},\ldots,X_{n}$ be independent random variables such that $\QQ(X_i)\leq \alpha_i$. Since the bound of Theorem \ref{main} is continuous with respect to the variables $\alpha_i$, we can consider only rational $\alpha_i =m_{i}/n_{i} \in (1/(k_{i}+1),1/k_{i}]$. We can without loss of generality assume that the random variables $X_i$ are simple (see the Appendix). Thus each random variable $X_i$ takes values in some finite set $\left\{y_{i,1},\ldots, y_{i,D}\right\}$ (we can take one value of $D$ for all variables by adding some points with $0$ probability). 

Moreover, we can assume that the probabilities $\mathbb{P}\left(X_{i}=y_{i,k}\right)$ are also rational. Thus $\mathbb{P}\left(X_{i}=y_{i,k}\right)=m_{i,k}/n_{i,k}$. Writing $N_{i}=n_{i}\prod_{j=1}^{n}n_{i,j}$ we can look at the distribution of $X_{i}$ as a uniform distribution on a multiset with $N_{i}$ elements. By Lemma \ref{repr} we have\\
\begin{equation*}
\mathcal{L}(X_{i})=\frac{1-\tau_{i}}{K_{i}}\sum_{l_{i}=1}^{K_{i}}\mu^{k_{i}+1}_{i,l_{i}}+\frac{\tau_{i}}{L_{i}}\sum_{l_{i}=K_{i}+1}^{K_{i}+L_{i}}\mu^{k_{i}}_{i,l_{i}},
\end{equation*}  
where $K_i, L_i$ and $\tau_{i}$ are defined as in Lemma \ref{repr}. \\

 We shall expand the product measure $\prod_{i=1}^{n}\mathcal{L}(X_{i})$ into a sum of products of the measures $\mu_{i,l_{i}}^{\tilde{k}_{i}}$, where $\tilde{k}_{i}=k_{i}+1$ for $l_{i} \leq K_{i}$ and $\tilde{k}_{i}=k_{i}$ otherwise. For the same ranges of $l_{i}$ define $\tilde{\tau_{i}}$ in a natural way - the coefficient in front of $\mu_{i,l_{i}}^{\tilde{k_{i}}}$. Expanding the product measure and using Lemma \ref{LRlemma} term by term we obtain

\begin{eqnarray*}
&&\mathbb{P}\left(X_{1}+\cdots+X_{n}\in (x-\ell,x+\ell]\right)\\
&=&\prod_{i=1}^{n}\mathcal{L}(X_{i})((x-\ell,x+\ell])\\
&=&\prod_{i=1}^{n} \left(\frac{1-\tau_{i}}{K_{i}}\sum_{l_{i}=1}^{K_{i}}\mu^{k_{i}}_{i,l_{i}}+\frac{\tau_{i}}{L_{i}}\sum_{l=K_{i}+1}^{K_{i}+L_{i}}\mu^{k_{i}+1}_{i,l_{i}}\right)((x-\ell,x+\ell])\\
&=&\prod_{i=1}^{n}\left(\tilde{\tau_{i}}\sum_{l=1}^{K_{i}+L_{i}}\mu^{\tilde{k}_{i}}_{i,l}\right)((x-\ell,x+\ell])\\
&=&\sum_{l_{1},\ldots,l_{n}}\prod_{i=1}^{n}\tilde{\tau_{i}}\mu^{\tilde{k}_{i}}_{i,l}((x-\ell,x+\ell])\\
&\leq& \sum_{l_{1},\ldots,l_{n}}\prod_{i=1}^{n}\tilde{\tau_{i}}\nu^{\tilde{k}_{i}}_{i,l}((-k,k])\\
&=&\prod_{i=1}^{n}\left(\tilde{\tau_{i}}\sum_{l=1}^{K_{i}+L_{i}}\nu^{\tilde{k}_{i}}_{i,l}\right)((-k,k])\\
 &=&\prod_{i=1}^{n}\left(\frac{1-\tau_{i}}{K_{i}}\sum_{l_{i}=1}^{K_{i}}\nu^{k_{i}}_{i,l_{i}}+\frac{\tau_{i}}{L_{i}}\sum_{l=K_{i}+1}^{K_{i}+L_{i}}\nu^{k_{i}+1}_{i,l_{i}}\right)((-k,k])\\
&=&\prod_{i=1}^{n}(\tau_{i}\nu_{i}^{k_{i}}+(1-\tau_{i})\nu_{i}^{k_{i}+1})((-k,k])\\
&=&\mathbb{P}\left(T_{1}(\alpha_1)+\cdots+T_{n}(\alpha_n)\in (-k,k]\right).
\end{eqnarray*}

This completes the proof of Theorem \ref{main} and even provides the interval has the most mass under the distribution of $T_{1}(\alpha_1)+\cdots+T_{n}(\alpha_n)$.

\section{Proofs of the corollaries}

Corollary \ref{cor1} follows from Theorem \ref{main} by the Local Limit Theorem when $\frac{1}{\alpha}$ is not an integer. When $\frac{1}{\alpha}\in \N$ it follows either by continuity of the bound or one can use the Local Limit Theorem for the rescaled random variables $T_i(\alpha)$ after the application of Theorem \ref{main}.\\

{\em Proof of Corollary \ref{kant}.} When $\alpha_i\in [1/2,1]$ the sharp inequality of Theorem \ref{main} is given in terms of symmetric distributions. For symmetric independent real valued random variables $X_i$ such that $\mathbb{P}(|X_i|\geq 1)=2(1-\alpha_i)$ Kanter's inequality (see Corollary $1.3$ in \cite{Mattner}) states that for all $x\in \R$ we have
$$\mathbb{P}(|X_1+\cdots +X_n-x|<1)\leq \pr(\eta_1-\eta_2 \in \{0,1\}),$$
where $\eta_i$ are independent copies of a Poisson random variable with mean $\lambda=\sum_{i=1}^{n}(1-\alpha_{i})$.\\

To finish up, just notice that for the extremal distribution $T_{1}(\alpha_1)+\cdots+T_{n}(\alpha_n)$, coming from Theorem \ref{main}, has the largest probability in the interval of the form $(x,x+2]$ exactly as the maximal probability of the same length open interval. This means we can apply Kanter's inequality to obtain the desired result.\\

The second inequality was established by Mattner and Roos (Lemma $1.4$ in \cite{Mattner}).\\

Let us formulate a natural conjecture regarding an analytically simpler form of the bound in Theorem \ref{main} that is of the flavor of the Corollary \ref{kant}. Kanter \cite{kanter} actually proved a more detailed result - he showed, in our notation, that for $\alpha_i\geq 1/2$ the value of $\QQ(T_{1}(\alpha_1)+\cdots+T_{n}(\alpha_n))$ is at most $\QQ(T_{1}(\alpha)+\cdots+T_{n}(\alpha))$, where $\alpha=\frac{1}{n}\sum_{i}\alpha_i$. If $\alpha_{i}\geq 3/4$ the result follows by majorization techniques or by using the positivity of the corresponding characteristic functions. The general case is much more involved and follows by quite delicate analysis of certain integrals (see \cite{Mattner}). It is tempting to believe that a similar phenomenon remains true in the unrestricted case. Let us be more precise. In the case $\alpha_i\geq 1/2$ the variances of $T_i(\alpha_i)$ are linear in $\alpha_i$ and so averaging the values $\alpha_i$ is the same as averaging the variances of the random variables. We thus have the following conjecture.

\begin{conjecture}
For any $\alpha_i\in [0,1]$ we have
$$\QQ(T_{1}(\alpha_1)+\cdots+T_{n}(\alpha_n))\leq \QQ(T_{1}(\alpha)+\cdots+T_{n}(\alpha)),$$
where all random variables are independent and $\alpha$ is the unique value in $[0,1]$ such that
$$\Var(T_{1}(\alpha_1)+\cdots+T_{n}(\alpha_n))=\Var(T_{1}(\alpha)+\cdots+T_{n}(\alpha)).$$
\end{conjecture}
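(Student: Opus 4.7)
The plan is to convert the conjecture into a pairwise Schur-concavity statement and then attack it via Fourier inversion. First I would verify that $\alpha\mapsto \Var(T(\alpha))$ is a continuous strictly decreasing bijection of $(0,1]$ onto $[0,+\infty)$; from the mixture $T(\alpha)=(1-\tau)\nu^{k+1}+\tau\nu^{k}$ on $\alpha\in[1/(k+1),1/k]$ one computes explicitly
\begin{equation*}
\Var(T(\alpha))\;=\;k(k+1)\!\left(1-\tfrac{(2k+1)\alpha}{3}\right)\quad\text{on}\ [1/(k+1),1/k],
\end{equation*}
which glues continuously across the breakpoints $\alpha=1/k$. This lets me define a variance-preserving averaging move: pick $i\neq j$ and replace $(\alpha_i,\alpha_j)$ by $(\alpha_i',\alpha_j')$ with $\Var(T(\alpha_i'))+\Var(T(\alpha_j'))=\Var(T(\alpha_i))+\Var(T(\alpha_j))$ and the two new values strictly closer together. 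Iterating such moves drives any $n$-tuple to the constant tuple $(\alpha,\ldots,\alpha)$ prescribed in the conjecture, so it is enough to prove that each such move is $\QQ$-non-decreasing.

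This reduces the conjecture to a local, two-variable statement: for every symmetric integer-valued random variable $Y$ independent of $T(\alpha_1),T(\alpha_2)$ and every variance-preserving averaging move $(\alpha_1,\alpha_2)\mapsto(\beta_1,\beta_2)$,
\begin{equation*}
\QQ(T(\alpha_1)+T(\alpha_2)+Y)\;\leq\;\QQ(T(\beta_1)+T(\beta_2)+Y).
\end{equation*}
By Theorem~\ref{main} the supremum defining $\QQ$ of any such sum is attained on the interval $(-1,1]=\{0,1\}$, so Fourier inversion on $\Z$ together with the symmetry of $S$ gives the clean representation
\begin{equation*}
\QQ(S)\;=\;\frac{1}{\pi}\int_{-\pi}^{\pi}\varphi_S(t)\cos^2(t/2)\,dt
\end{equation*}
valid for any symmetric integer-valued $S$ whose concentration is realized on $\{0,1\}$, with the weight $\cos^2(t/2)\geq 0$. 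The two-variable inequality is then a statement about the map $(\alpha_1,\alpha_2)\mapsto\varphi_{T(\alpha_1)}(t)\varphi_{T(\alpha_2)}(t)$ integrated against the common non-negative factor $\cos^2(t/2)\,\varphi_Y(t)$, with each $\varphi_{T(\alpha)}(t)$ the explicit rational trigonometric function obtained from the Dirichlet-kernel identity $\varphi_{\nu^k}(t)=\sin(kt)/(k\sin t)$ and the mixture formula.

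The natural sub-claim to attempt is the pointwise bound
\begin{equation*}
\varphi_{T(\alpha_1)}(t)\varphi_{T(\alpha_2)}(t)\;\leq\;\varphi_{T(\beta_1)}(t)\varphi_{T(\beta_2)}(t)
\end{equation*}
on every variance-preserving move, which combined with the non-negativity of $\cos^2(t/2)$ would settle the two-variable lemma for every $Y$ with non-negative characteristic function. On the range of frequencies where the relevant $\varphi_{T(\alpha)}(t)$ are positive, I would try to deduce this from concavity of $v\mapsto\log\varphi_{T(\alpha(v))}(t)$ in the variance parametrization, an approach that in the regime $\alpha\ge 3/4$ is essentially Kanter's classical positivity argument revisited.

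The main obstacle is the sign of $\varphi_{T(\alpha)}(t)$: for $\alpha<3/4$ the characteristic function takes negative values, and as $\alpha$ decreases the zeros of $\varphi_{T(\alpha)}$ sweep through $(-\pi,\pi]$, so both the pointwise product inequality and any naive log-concavity argument break down. I expect genuinely new input is needed here---most likely a global Fourier comparison that exploits the fine structure of the mixture $(1-\tau)\nu^{k+1}+\tau\nu^{k}$ across the breakpoints $\alpha=1/k$, in the spirit of the delicate integral analysis of Mattner and Roos used for $\alpha_i\geq 1/2$. A reasonable first milestone is the case $n=2$, $Y=0$, where the problem collapses to a one-parameter inequality on a single variance level set, tractable by explicit computation, and which would already provide substantial evidence for the conjecture.
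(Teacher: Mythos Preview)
The statement you are addressing is stated in the paper as a \emph{Conjecture}, not a theorem; the paper gives no proof of it. There is therefore nothing to compare your proposal against. The paper only records the known partial results: Kanter's theorem covers $\alpha_i\ge 1/2$, and the subrange $\alpha_i\ge 3/4$ admits the easy argument via majorization/positivity of characteristic functions, while the full $\alpha_i\ge 1/2$ case already requires the delicate analysis of Mattner--Roos. Your write-up is consistent with this: you outline a strategy, verify the preliminary computations, and then correctly identify the exact obstruction---the sign changes of $\varphi_{T(\alpha)}$ once $\alpha<3/4$---as the point where the argument stops. That diagnosis matches the paper's discussion precisely.

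Two remarks on the proposal itself. First, your reduction to a two-variable lemma is sound in spirit, but note that in the iterative scheme the ``background'' variable $Y$ is a sum of the remaining $T_j(\alpha_j)$'s, and hence $\varphi_Y$ will itself change sign as soon as some $\alpha_j<3/4$. So even a pointwise product inequality $\varphi_{T(\alpha_1)}\varphi_{T(\alpha_2)}\le\varphi_{T(\beta_1)}\varphi_{T(\beta_2)}$ would not close the induction; you would need the two-variable lemma for \emph{arbitrary} symmetric integer-valued $Y$, not only those with $\varphi_Y\ge 0$. You note this restriction in passing, but it is worth stressing that it is a second, independent gap on top of the sign problem for the pair itself. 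Second, the variance computation and the Fourier representation $\QQ(S)=\frac{1}{\pi}\int_{-\pi}^{\pi}\varphi_S(t)\cos^2(t/2)\,dt$ are correct, and the $n=2$, $Y=0$ milestone you propose is a reasonable and genuinely nontrivial test case. In short: your proposal is an honest outline with the obstacles named, not a proof, and the paper does not claim one either.
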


\begin{funding}
The author was supported by the Czech Science Foundation, grant number 18-01472Y.
\end{funding}

\bibliographystyle{imsart-nameyear.bst}
\bibliography{mybibliography}

\section{Appendix}

The goal of this section is to show that the all random variables $X_i$ in Theorem \ref{main} can be assumed to be simple. To achieve this reduction we will approximate the distribution functions under consideration uniformly by distribution functions of simple random variables. Note that weak convergence is insufficient for our purposes as weak convergence does not imply the convergence of concentration functions. The required approximation comes from a well known fact in real analysis - we can approximate any bounded measurable functions by step functions, giving us the required discretization. Let us be more precise. Consider a random variable $X$ with distribution function $F(t)=\mathbb{P}\left(X\leq t\right)$. For all $m\in \N$ and $k=0,1,\ldots,m$ define the level sets 
$$A_k=\left\{t:\,\, F(t) \in \left(\frac{k-1}{m},\frac{k}{m}\right]\right\}.$$ 
The sets $A_k$ are intervals (possibly infinite) as $F$ is monotone. Furthermore, we define the sequence of functions $F_m$ by setting
\begin{equation*}
F_{m}(t)=\sum_{k=0}^{m}\frac{k}{m}\mathbbm{1}_{A_k}.
\end{equation*}

Each function $F_m$ is monotone and is a distribution function since

\begin{equation*}
\lim_{t\rightarrow \infty}F_{m}(t)=1 \quad \text{and} \quad \lim_{t\rightarrow -\infty}F_{m}(t)=0.
\end{equation*}

Consider the corresponding sequence of random variables $X^{(m)}$ with distribution function $F_{m}$. Since $F_m$ is a step function with differences between consecutive steps $\frac{1}{m}$ it follows that $X^{(m)}$ have a uniform distribution on a finite set. Furthermore, by the definition of the sequence $F_m$ we have that for all $t\in \R$

\begin{equation*}
\left|F(t)-F_{m}(t)\right|\leq \frac{1}{m}.
\end{equation*}

 It follows that $X^{(m)}$ converge to $X$ as $m\rightarrow \infty$ uniformly. It immediately follows that
\begin{eqnarray*}
\left|\QQ_{h}(X)-\QQ_{h}(X^{(m)})\right|&\leq& \sup_{t}\left|\left(F(t+h)-F(t)\right)-\left(F_{n}(t+h)-F_{n}(t)\right)\right|\\
&\leq&\sup_{t}\left|F(t+h)-F(t)\right|+\sup_{t}\left|F(t+h)-F(t)\right|\\
&\leq& \frac{2}{m}.
\end{eqnarray*}

We will be essentially done if we establish an analogous uniform approximation statement for sums of random variables. Fix $\varepsilon>0$ and for a sum of independent random variables $S_n=X_1+\cdots+X_n$ associate a corresponding discretized sum $S^{(m)}_n=X^{(m)}_{1}+\cdots+X^{(m)}_{n}$ as described above. Pick $m$ so that $2n/m<\varepsilon$. We have established the fact that on any interval $I=(x,x+h]$ we have $|\mathbb{P}(X^{(m)}_i \in I) - \mathbb{P}(X_{i}\in I)|\leq \frac{2}{m}$. \\

For $i=0,\ldots, n-1$ introduce an auxiliary sums of random variables
$$M^{i}_n=X^{(m)}_1+\cdots+X^{(m)}_i+X_{i+1}+\cdots+X_n.$$
We have that 
$$\mathbb{P}(S_n\in I)-\mathbb{P}(S^{(m)}_n\in I)= \sum_i \left(\mathbb{P}(M^{i}_n\in I)-\mathbb{P}(M^{i+1}_n\in I)\right).$$
By the triangle inequality we thus have 
$$\left|\mathbb{P}(X_1+\cdots+X_n\in I)-\mathbb{P}(X^{(m)}_1+\cdots+X^{(m)}_n\in I)\right|\leq \sum_{i}\left|\mathbb{P}(M^{i}_n\in I)-\mathbb{P}(M^{i+1}_n\in I)\right|.$$
Notice that $M^{i}_{n}$ and $M^{i+1}_{n}$ differ only in the $(i+1)-$th variable. Thus, conditioning on the outcomes of the remaining $n-1$ variables we have 
that each difference of probabilities in the latter sum are at most $\frac{2}{m}$ from the approximation result with a single random variable. Since the bound is true for conditional probabilities it also holds for unconditional ones. We therefore obtain
$$|\mathbb{P}(X_1+\cdots+X_n \in I)-\mathbb{P}(X^{(m)}_1+\cdots+X^{(m)}_n \in I)| \leq \frac{2n}{m} <\varepsilon.$$

We have to also discuss one last detail. After the discretization of a random variable $X$ we may slightly alter $\QQ(X)$. This effect turns out to be negligible in the context of Theorem \ref{main}. Indeed, notice that the upper bound in the theorem is continuous with respect to the values $\alpha_i$. This can be easily seen by taking the expectation with respect to $T_i$ - it then becomes a linear function of $\alpha_i$. By continuity we can thus assume that the discretized versions $X^{(m)}_{i}$ satisfy $\QQ(X^{(m)}_{i})\leq \alpha_i$ instead of a bound $\alpha_i+\frac{2}{m}$ which we get after discretizing which allows us to neglect some clutter in the proofs.

\end{document}